\theoremstyle{plain}
\newtheorem*{Morley}{Morley's  Congruence \cite{Morl}}
\newtheorem{lemma}{Lemma}
\begin{document}
\title[Morley's other miracle]{Morley's other miracle: $\displaystyle 4^{p-1}\equiv\pm \left(\smallmatrix p-1\\
\frac{p-1}{2} \endsmallmatrix\right) 
\pmod {p^3}
$}

\author{Christian Aebi and Grant Cairns}

\address{Coll\`ege Calvin, Geneva, Switzerland 1211}
\email{christian.aebi@edu.ge.ch}
\address{Department of Mathematics, La Trobe University, Melbourne, Australia 3086}
\email{G.Cairns@latrobe.edu.au}

\maketitle

In geometry, {\em Morley's miracle} says that in every planar triangle the adjacent angle trisectors meet at the
vertices of an equilateral triangle. Frank Morley obtained this wonderful result in 1899, and to this day it continues to attract interest. There are now many known proofs; see the cut-the-knot web site \cite{CTK}. Perhaps the most celebrated ones are those due to Alain Connes \cite{Connes} and  John Conway (unpublished, yet accessible at \cite{CTK}). A proof in the same spirit as Connes'  was published earlier by Liang-shin Hahn \cite{Hahn}; see also \cite{Geiges}. Conway's proof is perhaps the simplest and nicest one; a somewhat longer proof having the same general approach was given by Coxeter \cite{Coxeter}, and attributed to Raoul Bricard; see also \cite{New,Wa}.

\centerline{\includegraphics[scale=.75]{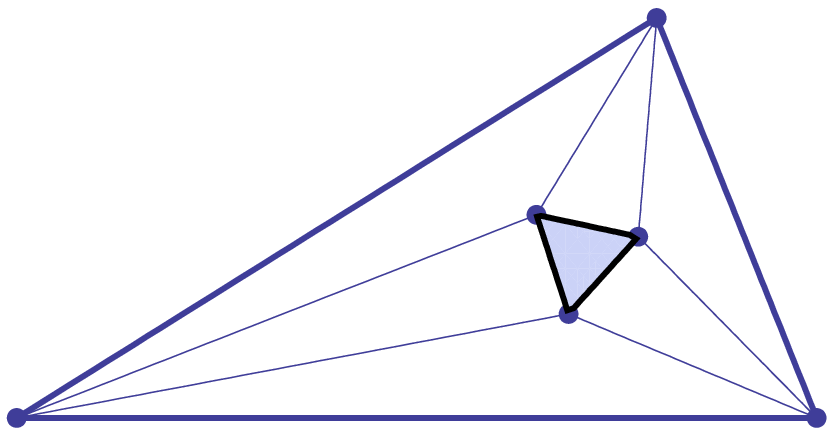}}
\bigskip

Morley's miracle was by no means his sole
surprising discovery. In number theory, he published the following result  in the Annals of Mathematics 1894/95.  

\begin{Morley} If $p$ is prime and 
$p>3$, then 
\[(-1)^{(p-1)/2}\cdot \left(\smallmatrix p-1\\
\frac{p-1}{2} \endsmallmatrix\right) \equiv2^{2p-2}
\pmod {p^3}.
\]
\end{Morley}

To appreciate the ``miraculous'' nature of this congruence, one first needs to compare it  with other congruences known at the time. Some famous ones for primes $p$ include:
\begin{itemize}
\item Fermat's little theorem:
$2^{p-1}\equiv 
1\pmod p$.

\item Wilson's theorem:  $(p-1)!\equiv -1 \pmod p$.

\item Lucas' theorem:
 If  $0\leq n,j<p$, then
$\left(\smallmatrix pm+n\\ pi+j\endsmallmatrix\right)\equiv \left(\smallmatrix m\\ i\endsmallmatrix\right)\left(\smallmatrix n\\ j\endsmallmatrix\right)\pmod {p}$.

\end{itemize}
The above three congruences are modulo $p$, while Morley's congruence is modulo $p^3$. The difference between mod $p^3$ and mod $p$ is analogous to having a result to  three significant figures, rather than just one significant figure.

The other striking aspect of Morley's congruence was the nature of his original proof, which  made an ingenious use of integration of
trigonometric sums. First  he used the Fourier series:
\begin{align*}
2^{2n}\cos^{2n+1}x&=\cos(2n+1)x+(2n+1)\cos(2n-1)x+\frac{(2n+1)2n}{1.2}\cos(2n-3)x\\&\quad+\dots+\frac{(2n+1)2n\dots(n+2)}{n!}\cos x.
\end{align*}
He integrated this term by  term and compared it with the following formula, which can be obtained by induction using integration by parts:
\begin{equation}\label{ind}
\int_0^{\frac12\pi}\cos^{2n+1}x dx=\frac{2n(2n-2)\dots2}{(2n+1)(2n-1)\dots3}.
\end{equation}
This established his result modulo $p^2$, where $p=2n+1$.  To obtain the result modulo $p^3$, Morley then used (\ref{ind}) again to integrate the following power series in $\cos(x)$,  known from``treatises on trigonometry'':
\begin{align*}
(-1)^{\frac{p-1}2}\cos px&=p\cos x-\frac{p(p^2-1^2)}{3!}\cos^3x+\frac{p(p^2-1^2)(p^2-3^2)}{5!}\cos^5x\\&\quad-\dots+(-1)^{\frac{p-1}2}2^{p-1}\cos^px.
\end{align*}

Subsequently, two alternate proofs were given that used the properties of Bernoulli numbers: the 1913
Royal Danish Academy of Sciences paper by Niels Nielsen \cite[p.~353]{Nielsen} and the 1938 Annals of Mathematics paper by Emma Lehmer
\cite[p.~360]{Lehmer}. 

The main aim of this note is to establish Morley's congruence by entirely elementary number theory arguments. The key to this approach is the following
basic congruence modulo $p$ that curiously, we have not seen in the literature.  

\begin{lemma}\label{L1} If $p$ is prime and 
$p>3$, then $\displaystyle
\sum_{\substack{0< i<j< p\\i\text{ odd},j\text{ even}}} \frac {1}{ij}\equiv 0\pmod {p}$.
\end{lemma}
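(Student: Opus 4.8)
The plan is to exploit a partial-fraction identity together with the reflection $k\mapsto p-k$, which interchanges odd and even residues because $p$ is odd. Write $S$ for the sum in the lemma, and let $A=\sum 1/i$ and $B=\sum 1/j$, where $i$ runs over the odd and $j$ over the even integers in $[1,p-1]$. First I would record the elementary congruence $\sum_{k=1}^{p-1}1/k\equiv 0\pmod p$ (the reciprocals run over all nonzero residues), which gives $A\equiv -B$, hence $A^2\equiv B^2$ and $AB\equiv -A^2\pmod p$. I also note the exact identity $AB=S+S'$, where $S'$ is the companion sum over even $j<$ odd $i$: the product $AB$ ranges over all (odd, even) pairs, and each such pair has $i\ne j$, so it lies in exactly one of the cases $i<j$ or $i>j$.

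Next, since $i<j$ forces $d:=j-i$ to be a positive odd unit modulo $p$, I would split $S$ using $\frac1{ij}=\frac1{j-i}\bigl(\frac1i-\frac1j\bigr)$, writing $S=S_1-S_2$ with $S_1=\sum \frac{1}{(j-i)i}$ and $S_2=\sum\frac{1}{(j-i)j}$. Applying the reflection $i\mapsto p-i$, $j\mapsto p-j$, which sends an odd/even pair $(i,j)$ with $i<j$ to an even/odd pair with the inequality reversed, turns $S_1$ into $-S_2$ modulo $p$ (one extra sign appears from $i\equiv -(p-i)$). Thus $S_1\equiv-S_2$ and $S\equiv 2S_1\pmod p$.

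The heart of the argument is the evaluation of $S_1$. Summing over even $j>i$ for fixed odd $i$ replaces the inner sum by $\sum 1/d$ over the odd $d\le p-1-i$; a second application of the reflection rewrites this as $A$ plus the sum of $1/f$ over the even $f\le i-1$. Separating the two pieces yields $S_1\equiv A^2+S'\pmod p$, where the second piece is again the companion sum $S'$. This inner reflection is the main obstacle: one must track parities and the endpoints of the ranges carefully to see that the ``overshoot'' odd values $e>p-1-i$ correspond under $e\mapsto p-e$ precisely to the even values $f\le i-1$.

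Finally I would combine the three relations. From $AB=S+S'$ together with $AB\equiv -A^2$ we get $S'\equiv -A^2-S$; feeding this and $S_1\equiv A^2+S'$ into $S\equiv 2S_1$ gives $S\equiv 2A^2+2S'\equiv 2A^2+2(-A^2-S)=-2S$, whence $3S\equiv 0\pmod p$. Since $p>3$, the factor $3$ is invertible and $S\equiv 0\pmod p$, as claimed.
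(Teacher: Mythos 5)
Your proof is correct --- every step checks out, including the delicate inner-sum computation giving $S_1\equiv A^2+S'$ --- but it assembles the two shared tools (the partial-fraction identity $\frac1{ij}=\frac1{(j-i)i}-\frac1{(j-i)j}$ and the reflection $k\mapsto p-k$) in a genuinely different and longer way than the paper. The paper's key economy, which you miss, is that $S_2=\sum\frac1{(j-i)j}$ equals $S$ \emph{exactly}: for each fixed even $j$ the map $i\mapsto j-i$ permutes the odd residues in $(0,j)$, so no reflection modulo $p$ is needed there. This gives $S+S_2=S_1$ at once, i.e.\ $S_1=2S$, and then a single substitution $k=j-i$ followed by one reflection $k\mapsto p-k$ evaluates $S_1=\sum_{i,k\text{ odd},\,i+k<p}\frac1{ik}\equiv -S$, yielding $3S\equiv 0$ in three lines. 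You instead relate $S_1$ to $S_2$ only modulo $p$ via a double reflection ($S_1\equiv-S_2$, hence $S\equiv 2S_1$), and then evaluate $S_1$ by the detour $S_1\equiv A^2+S'$, which forces you to import the auxiliary facts $AB=S+S'$ and $A\equiv -B$ to eliminate $A^2$ and $S'$; note that your own intermediate results already give $A^2+S'\equiv -AB+S'\equiv -(S+S')+S'=-S$, so you land on the same evaluation $S_1\equiv -S$ as the paper, just by a longer route. What your version buys is that the identities $AB=S+S'$ and $A+B\equiv 0$ are exactly the ones the paper uses \emph{outside} the lemma (in reducing congruence (\ref{mo}) to Lemma \ref{L1}), so your argument makes that bookkeeping visible inside the proof; what it costs is length and the risk concentrated in the endpoint/parity tracking of the ``overshoot'' terms, which the paper's single clean reparametrization $\{(i,k):i,k\text{ odd},\,i+k<p\}$ avoids entirely. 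If you combine your decomposition $S=S_1-S_2$ with the exact identity $S_2=S$, you recover the paper's proof almost verbatim.
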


Here, $\frac1{ij}$ denotes the multiplicative inverse of $ij$ modulo
$p$. Throughout this note, $p$ is a prime greater than 3 and by a slight abuse of notation, $\frac1i$ will denote the fraction $1/i$ or the multiplicative inverse of $i$ modulo
$p$ or modulo $p^2$, according to the context.

After we have established Morley's congruence, we will show in the final section that it can also be deduced from Granville's elegant proof of  Skula's conjecture \cite{Granville}.

\section*{Reduction of the Problem}

We will use the following well known facts \cite[Theorem 117]{HW}, that we prove for completeness.

\begin{lemma}\label{L2} {\rm(a)} $\sum_{i=1}^{\frac{p-1}2} \frac{1}{i^2}\equiv 0\pmod {p}$,
\qquad{\rm(b)} $\sum_{i=1}^{p-1}\frac{(-1)^i}{i}\equiv \sum_{i=1}^{\frac{p-1}2} \frac{1}{i}\pmod{p^2}$.
\end{lemma}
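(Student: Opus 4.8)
The plan is to treat the two parts in order, using (a) as the essential ingredient for (b). For part (a), I would first pass from the partial sum to the full sum over all nonzero residues: since $(p-i)^2\equiv i^2\pmod p$, the reflection $i\mapsto p-i$ matches the terms with $i>\frac{p-1}2$ to those with $i\le\frac{p-1}2$, so that
\[
\sum_{i=1}^{p-1}\frac1{i^2}\equiv 2\sum_{i=1}^{(p-1)/2}\frac1{i^2}\pmod p.
\]
It therefore suffices to show the full sum vanishes mod $p$, and here I would use that inversion $i\mapsto i^{-1}$ permutes the nonzero residues, whence $\sum_{i=1}^{p-1}i^{-2}\equiv\sum_{i=1}^{p-1}i^2\pmod p$. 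The latter equals $\frac{(p-1)p(2p-1)}6$, which is divisible by $p$ as soon as $p>3$ (so that $p\nmid6$). Cancelling the harmless factor $2$ then gives (a).

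For part (b), I would separate the alternating sum into its even- and odd-indexed contributions. Writing $H=\sum_{i=1}^{p-1}\frac1i$ and $S=\sum_{i=1}^{(p-1)/2}\frac1i$, the even-indexed terms are $\sum_{k=1}^{(p-1)/2}\frac1{2k}=\frac12S$, so that
\[
\sum_{i=1}^{p-1}\frac{(-1)^i}i=\tfrac12S-\bigl(H-\tfrac12S\bigr)=S-H.
\]
Hence the asserted congruence modulo $p^2$ is exactly the statement $H\equiv0\pmod{p^2}$, i.e.\ Wolstenholme's theorem.

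To finish, I would establish $H\equiv0\pmod{p^2}$ through the pairing $\frac1i+\frac1{p-i}=\frac p{i(p-i)}$, which yields $H=p\sum_{i=1}^{(p-1)/2}\frac1{i(p-i)}$. Reducing the surviving sum modulo $p$ via $i(p-i)\equiv-i^2$ turns it into $-\sum_{i=1}^{(p-1)/2}\frac1{i^2}$, which vanishes by part (a); the remaining factor of $p$ out front then yields $H\equiv0\pmod{p^2}$. The only genuine difficulty is structural rather than computational: recognizing that (b) is Wolstenholme's congruence in disguise and that Wolstenholme itself is powered by (a). The rest is routine bookkeeping with the reflection $i\mapsto p-i$ and the bijection $i\mapsto i^{-1}$.
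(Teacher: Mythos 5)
Your proposal is correct and follows essentially the same route as the paper: part (a) by doubling via $i\mapsto p-i$ and using that inversion permutes the nonzero residues to reduce to $\sum i^2=\frac{(p-1)p(2p-1)}6$, and part (b) by reducing the alternating sum to Wolstenholme's theorem $\sum_{i=1}^{p-1}\frac1i\equiv0\pmod{p^2}$, itself obtained from the pairing $\frac1i+\frac1{p-i}\equiv-\frac p{i^2}\pmod{p^2}$ together with (a). The only difference is cosmetic: you phrase the pairing as an exact rational identity before reducing, whereas the paper works with the congruence directly.
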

\begin{proof}(a) As  $\frac{1}{i^2} \equiv
\frac{1}{(p-i)^2} \pmod p$, one has
\[
2\sum_{i=1}^{\frac{p-1}2} \frac{1}{i^2}= \sum_{i=1}^{p} \frac{1}{i^2} = \sum_{i=1}^{p} i^2 =\frac{(p-1)p(2p-1)}6\equiv 0\pmod {p}.\]

(b) For all $0< i\leqslant {\frac{p-1}2}$, one has $i(p-i)+i^2\equiv -p(p-i)\pmod{p^2}$ and dividing by $i^2(p-i)$ gives $\frac{1}i+\frac1{p-i}\equiv
-\frac{p}{i^2}\pmod{p^2}$. Summing and using (a) gives $\sum_{i=1}^{p-1}\frac1{i}\equiv 0\pmod{p^2}$, which is known as Wolstenholme's theorem. Thus
\[
\sum_{i=1}^{p-1}\frac{(-1)^i}i\equiv 2\sum_{\substack{i=2\\i\text{ even}}}^{p-1}\frac1{i}\equiv \sum_{i=1}^{\frac{p-1}2}
\frac1{i}\pmod{p^2}.
\]\end{proof}

Turning to the terms in Morley's congruence, first note that
\[
\binom{p}{i}=\frac{{p}\cdot{(p-1)}\cdot{(p-2)} \cdots {(p-(i-1))}}{{i} \cdot {1} \cdot {2} \cdots
{(i-1)}}
\]
and so 
\begin{equation}\label{exp}
\binom{p}{i}=(-1)^{i-1}\cdot\frac{p}i \cdot \left(1-\frac{p}1\right) \cdot
\left(1-\frac{p}2 \right)\cdots \left(1-\frac{p}{i-1}\right).
\end{equation}
Thus
$\binom{p}{i}\equiv(-1)^{i}\cdot\left(-\frac{p}i+ p^2\cdot\sum_{j=1}^{i-1} \frac 1{ij}\right)\pmod {p^3}$
and so $2^p= 2+\sum_{i=1}^{p-1} \binom{p}{i}$ gives 
\[
2^{p-1}\equiv 1-\frac{p}2\cdot\sum_{i=1}^{{p-1}} \frac{(-1)^{i}}i+ \frac{p^2}2\cdot\sum_{0< j< i<p} \frac {(-1)^{i}}{ij}\pmod
{p^3}.
\]
Squaring, and using Lemma \ref{L2}(b), we have 
\begin{equation}\label{lh}
2^{2p-2}\equiv 1-p\cdot\sum_{i=1}^{\frac{p-1}2} \frac1i+ p^2\cdot\left(\frac14\left(\sum_{i=1}^{\frac{p-1}2}
\frac1i\right)^2+ \sum_{0< j< i<p} \frac{(-1)^{i}}{ij}\right)\pmod {p^3}.
\end{equation}
From (\ref{exp}) we also have
$(-1)^{i-1}\binom{p-1}{i-1}=(-1)^{i-1}\frac{i}{p}\binom{p}{i}= \left(1-\frac{p}1\right) \cdot
\left(1-\frac{p}2 \right)\cdots \left(1-\frac{p}{i-1}\right)$.
Taking $i=\frac{p+1}2$ gives 
$(-1)^{\frac{p-1}{2}}\cdot\binom{p-1}{\frac{p-1}{2}} \equiv
1-p\cdot\sum_{i=1}^{\frac{p-1}2} \frac{1}i+p^2\cdot \sum_{1\leq j<i\leq \frac{p-1}2}\frac{1}{i j}\pmod{p^3}$,
or equivalently, using Lemma \ref{L2}(a),
\begin{equation}\label{rh}(-1)^{\frac{p-1}{2}}\cdot\binom{p-1}{\frac{p-1}{2}} \equiv
1-p\cdot\sum_{i=1}^{\frac{p-1}2} \frac{1}i+\frac{p^2}2\cdot \left(\sum_{i=1}^{\frac{p-1}2} \frac1{i}\right)^2\pmod{p^3}.
\end{equation}
Comparing (\ref{lh}) and (\ref{rh}), we observe that Morley's congruence is therefore
valid mod $p^2$.  In order to obtain it mod $p^3$, it suffices to prove that
$\frac14\left(\sum_{i=1}^{\frac{p-1}2}
\frac1i\right)^2\equiv  \sum_{0< j< i<p} \frac{(-1)^{i}}{ij}\pmod {p}$,
or equivalently,
\begin{equation}\label{mo}
\left(\sum_{\substack{0< i< p\\i\text{ even}}} \frac1i\right)^2\equiv  \sum_{0< j< i<p} \frac{(-1)^{i}}{ij}\pmod {p}.
\end{equation}
The considerations so far have reduced Morley's congruence modulo $p^3$ to a congruence modulo $p$.

\section*{Completion of the Proof}

In the remainder of this note, all congruences are taken modulo $p$. First notice that as $\displaystyle\sum_{\substack{0< i< p\\i\text{ even}}} \frac {1}{i}= -\sum_{\substack{0< i< p\\i\text{ odd}}} \frac
{1}{i}$,  the left hand side of (\ref{mo}) is
\[
\left(\sum_{\substack{0< i< p\\i\text{ even}}} \frac {1}{i}\right)^2\equiv -\left(\sum_{\substack{0< i< p\\i\text{
odd}}} \frac {1}{i}\right)\left(\sum_{\substack{0< j< p\\j\text{ even}}} \frac {1}{j}\right)\equiv -\sum_{\substack{0<
j<i< p\\i\text{ odd},j\text{ even}}} \frac {1}{ij} -\sum_{\substack{0< i<j< p\\i\text{ odd},j\text{ even}}} \frac {1}{ij}.\]
On the other hand, 
\[
\sum_{\substack{0< j<i< p\\i,j\text{ odd}}} \frac {1}{ij}= \sum_{\substack{0< i<j< p\\i,j\text{ even}}} \frac
{1}{(p-i)(p-j)}\equiv \sum_{\substack{0< i<j< p\\i,j\text{ even}}} \frac
{1}{ij}
 \]
and so the right hand side of (\ref{mo}) is
\begin{align*}
\sum_{0< j< i<p} \frac{(-1)^{i}}{ij}&=\sum_{\substack{0< j<i< p\\i,j\text{ even}}}
\frac{1}{ij}-\sum_{\substack{0< j<i< p\\i,j\text{ odd}}}
\frac{1}{ij}-\sum_{\substack{0< j<i< p\\i\text{ odd}, j\text{ even}}}
\frac{1}{ij}+\sum_{\substack{0< j<i< p\\i\text{ even},j\text{ odd}}}
\frac{1}{ij}\\
&\equiv -\sum_{\substack{0< j<i< p\\i\text{ odd},j\text{ even}}} \frac {1}{ij} +
\sum_{\substack{0<i<j< p\\i\text{ odd},j\text{ even}}}
\frac{1}{ij}.
\end{align*}
Hence (\ref{mo}) follows from Lemma \ref{L1}, and so the proof of Lemma \ref{L1} is our final task.

\begin{proof}[Proof of Lemma \ref{L1}] We have
\begin{align*}
2\sum_{\substack{0< i<j< p\\i\text{ odd},j\text{ even}}} \frac {1}{ij}=
\sum_{\substack{0< i<j< p\\i\text{ odd},j\text{ even}}} &\frac {1}{ij}+\frac {1}{(j-i)j}
=\sum_{\substack{0< i<j< p\\i\text{ odd},j\text{
even}}} \frac
{1}{i(j-i)}=\sum_{\substack{0< i,k< p\\i+k<p\\i,k\text{ odd}}} \frac {1}{ik}\\
\equiv &\sum_{\substack{0< i<j< p\\i\text{ odd},j\text{
even}}} \frac {1}{i(p-j)}\equiv -\sum_{\substack{0< i<j< p\\i\text{ odd},j\text{
even}}} \frac
{1}{ij}
\end{align*}
which gives the required result, as $p>3$.\end{proof}

\section*{The connection with Skula's conjecture}\label{proof1} 

Consider the Fermat quotient $q=\frac{2^{p-1}-1}p$, and note that
\begin{equation}\label{bas}
2^{2p-2}=1+2qp+q^2p^2.
\end{equation}
Adopting the notation of \cite{Granville}, set 
\[
q(x)=\frac{x^p-(x-1)^p-1}p,\qquad g(x)=\sum_{i=1}^{p-1}\frac{x^i}i,\qquad G(x)=\sum_{i=1}^{p-1}\frac{x^i}{i^2}.\]
Note that $q=q(2)/2$. The following remarkable identity was established in \cite{Granville}:
\begin{equation}\label{Gr}
-G(x)\equiv \frac1p (q(x)+g(1-x))\pmod p,
\end{equation}
from which Granville deduced Skula's conjecture:
$q^2\equiv -G(2)\pmod p$.
From (\ref{Gr}), 
\[
2q\equiv -g(-1)-G(2)p\equiv -g(-1)+q^2p\pmod {p^2}.\]
Hence, substituting in (\ref{bas}), we obtain
\begin{equation}\label{rq}
2^{2p-2}=1+2qp+q^2p^2\equiv  1-g(-1)p+\frac12g(-1)^2p^2\pmod {p^3}.
\end{equation}
From Lemma \ref{L2}(b),  $g(-1)\equiv \sum_{i=1}^{\frac{p-1}2}
\frac1{i}\pmod{p^2}$, and so from (\ref{rq})
\[
2^{2p-2}\equiv 1-\left(\sum_{i=1}^{\frac{p-1}2} \frac1{i}\right)p +\frac12\left(\sum_{i=1}^{\frac{p-1}2} \frac1{i}\right)^2p^2 \pmod {p^3}.
\]
Together with (\ref{rh}), this gives Morley's congruence once again.

\vskip1cm
\noindent{\bf Acknowledgements.} We would like to thank the anonymous referees who proposed both constructive and illuminating remarks.

\bibliographystyle{amsplain}
\bibliography{morleyfinal}
\end{document}